\newcommand{\x} {{\bf x}}
\newcommand{\apriori}{{\it a priori}~}
\newcommand{\ie}{{\it i.e.}~}
\newcommand{\eg}{{\it e.g.}~}
\newcommand{\gest}{\bar{g}^{\text{est}}}
\newcommand{\ave}[1]{\left\langle#1 \right\rangle}
\newcommand{\elabel}[1]{\label{eq:#1}}
\newcommand{\eref}[1]{(Eq.~\ref{eq:#1})}
\newcommand{\Eref}[1]{Equation~(\ref{eq:#1})}
\newcommand{\flabel}[1]{\label{fig:#1}}
\newcommand{\fref}[1]{Fig.~\ref{fig:#1}}
\begin{document}
\vsize21.5cm 

 
\title[Time resolution of St. Petersburg paradox]{The time resolution of the St. Petersburg paradox}

\author[O. Peters]{Ole Peters$^{1,2,3}$}

\affiliation{ $^{1}$Dept. of Mathematics and Grantham Institute for Climate Change, Imperial College London, SW7
  2AZ, UK.\\
and\\
$^{2}$Dept. of Atmospheric and Oceanic Sciences,
University of California Los Angeles,
7127 Math. Sci. Bldg.,
405 Hilgard Ave.,
Los Angeles, CA, 90095-1565, USA,\\
and\\
$^{3}$Santa Fe Institute,
1399 Hyde Park Rd., 
Santa Fe, NM, 87501, 
USA}

\label{firstpage}

\maketitle

\begin{abstract}%
{St. Petersburg paradox, expectation, ergodicity, risk,
  utility}

A resolution of the St. Petersburg paradox is presented. In contrast
to the standard resolution, utility is not required. Instead, the
time-average performance of the lottery is computed. The final result
can be phrased mathematically identically to Daniel Bernoulli's
resolution, which uses logarithmic utility, but is derived using a
conceptually different argument. The advantage of the time resolution is
the elimination of arbitrary utility functions.
\end{abstract}

\bigskip
\noindent{\small Submitted to  \textit{Philosophical Transactions of the Royal Society A.} \\ Theme Issue `Handling Uncertainty in Science'.} 
\vspace{1cm}

I investigate practical consequences of a radical idea built into the
foundations of probability theory. The idea is that of embedding a
stochastic system in an ensemble of systems which all start in the
same state but develop along different trajectories. To understand how
this idea was absorbed into the theory, the original motivation for
developing the concept of probability and expectation values is
reviewed in Sec.~\ref{Origins}. Section~\ref{lottery} describes the
St. Petersburg paradox, the first well documented example of a
situation where the use of ensembles leads to absurd
conclusions. Daniel Bernoulli's 1738 response to the paradox is
presented in Sec.~\ref{Bernoulli's}, followed by a reminder of the
more recent concept of ergodicity in Sec.~\ref{Ergodicity}, which
leads to an alternative resolution in Sec.~\ref{Resolution} with the
key Theorem~\ref{time_theorem}. Section~\ref{Relation} explains the
intriguing relation of this mathematically similar but conceptually
wholly different resolution to Bernoulli's work and resolves
difficulties with unbounded utility functions noted by
\cite{Menger1934}. Section~\ref{Discussion} concludes that the
prominence in economics of Bernoulli's early arguments may have
contributed to poor risk assessments of modern financial products,
with consequences for market stability through the effect of credit
and leverage, as foreseen by writers as early as Adam Smith.

\section{Origins of probability theory}
\label{Origins}
Formal and concrete concepts of likelihood were first developed in the
context of gambling -- notable are the works by \cite{dePacioli1494},
by Cardano in the mid-16$^{\text{th}}$ century \citep{Ore1953}, and
the work by Pascal and Fermat in the summer of 1654.  A prominent
question treated by \cite{dePacioli1494} as well as Pascal and Fermat
(1654) is the following ``problem of the points''\footnote{See
\citep{Devlin2008} for a detailed historical account.}: imagine that a
game of dice has to be abandoned before it can be concluded. For
instance, players may be betting money on the highest score in rolling
a dice three times but have to leave after two throws. In this
situation, how is the ``pot'', the total wager, to be distributed
among the players in a fair manner?

The first observation is that this is a moral question. Mathematics
may aid in answering it, but cannot resolve it without appealing to
external information, as any answer must depend on the concept of
fairness. It could be perceived as fair that the player with the most
points is entitled to the total wager. Another concept of fairness
would be to call the game inconclusive and return to each player his
or her individual wager, or the pot may be split equally between the
participants. Apparently, at least until the $17^{\text{th}}$ century,
there was no universal agreement on the relevant concept of
fairness. \cite{dePacioli1494}, for instance, argued that the fair
solution is to divide the pot in proportion to the points that each
player has accrued when the game is interrupted, see
\citep{Devlin2008}, p.~15.

A century and a half later Pascal was approached by Chevalier de
M\'er\'e to produce a conclusive argument based on mathematics that
would settle the issue. Pascal and Fermat corresponded on the subject
and agreed that the fair solution is to give to each player the
expectation value of his winnings. The expectation value they computed
is an ensemble average, where all possible outcomes of the game are
enumerated, and the product of winnings and probabilities associated
with each outcome for each player are added up. This procedure uses
the then revolutionary idea of parallel universes. Instead of
considering only the state of the universe as it is, or will be, an
infinity of additional equally likely universes is imagined. Any of
these additional universes, for all we know, could be reality (\ie the
world as it will be). The proportion of those universes where some
event occurs is the probability of that event. We will see that in the
$18^{\text{th}}$ century Bernoulli noticed undesired properties of the
ensemble average, and in the $19^{\text{th}}$ century Boltzmann began
to specify conditions for its applicability.

The 1654 investigation, which is generally considered the beginning of
probability theory, was concerned with a specific problem. It did not
attempt to make any predictions, for instance involving repetitions of
the game, but solely gave quantitative guidelines where individuals
had incompatible moral intuitions. Moral considerations were
certainly at the heart of the early debate. Pascal famously used
expectation values to argue in favor of his religious beliefs, and
much of Cardano's work on gambling is concerned with morals. He came
very close to defining a fair game as one where no player has an
expected advantage over others: ``To the extent to which you depart
from that equality, if it is in your opponent's favor, you are a fool,
if in your own, you are unjust'' \citep{Ore1953} p.~189.

Following Pascal's and Fermat's work, however, it did not take long
for others to recognize the potential of their investigation for
making predictions. \cite{Halley1693}, writing in these pages 318
years ago, built on earlier work by \cite{Graunt1662}, and devised a
method for pricing life annuities. The idea of embedding reality in
infinitely many possible alternatives was revolutionary in 1654, it
was essential in the development of statistical mechanics in the
19$^{\text{th}}$ century \citep{Ehrenfest1912,Cohen1996}, and it
continues to be a fruitful means of conceptualizing complex and
stochastic systems \citep{Gell-MannLloyd2004}. Nonetheless the idea
itself is a dubious philosophical construct, justified empirically by
the success that, under appropriate conditions, comes with allowing
the use of mathematical rigor. Historically, it seems that the
philosophical weakness was initially ignored in applications. In
Sec.~\ref{Ergodicity} we will review an alternative conceptualization
of randomness.

\cite{Huygens1657} is credited with making the concept of expectation
values explicit and with first proposing an axiomatic form of
probability theory. This was helpful in developing the field
mathematically, as results could now be proven to be correct. On the
other hand, by introducing an axiomatic system, correctness becomes
restricted to the context of the axioms themselves. A proven result in
probability theory follows from the axioms of probability theory, now
usually those of \cite{Kolmogorov1933}. It is related to reality only
insofar as the relevant real conditions are reflected by the
axioms. \cite{Kolmogorov1933} wrote ``The theory of probability [..]
should be developed from axioms in exactly the same way as Geometry
and Algebra. This means that after we have defined the elements to be
studied and their basic relations, and have stated the axioms by which
these relations are to be governed, all further exposition must be
based exclusively on these axioms, independent of the usual concrete
meaning of these elements and their relations.'' He wrote that it
would be a different ``aim [..] to tie up as closely as possible the
mathematical theory with the empirical development of the theory of
probability.''

To summarize: the first systematic investigation into stochastic
systems was concerned with moral advice. The second established an
axiomatic system.

\section{The lottery}
\label{lottery}
The St. Petersburg paradox was first put forward by Nicolaus Bernoulli
in 1713 \citep{Montmort1713} p.~402. He considered lotteries of the
following type:

A fair coin is tossed.

1) On heads, the lottery pays \$1, and the game ends.
On tails, the coin is tossed again.

2) On heads, the lottery pays \$2, and the game ends.
On tails, the coin is tossed again.

$\cdots$

n) On heads, the lottery pays \$$2^{n-1}$, and the game ends.
On tails, the coin is tossed again.

$\cdots$

In other words, the random number of coin tosses, $n$, follows a
geometric distribution with parameter $1/2$, and the payouts increase
exponentially with $n$. We may call $n$ a ``waiting time'', although
in this study it is assumed that the lottery is performed
instantaneously, \ie a geometric random variable is drawn and no
significant physical time elapses. The expected payout from this game
is
\begin{equation}
\$\sum_{n=1}^{\infty} \left(\frac{1}{2}\right)^n 2^{n-1} =\$\left( \frac{1}{2}+ \frac{1}{2}+...\right),
\elabel{ensemble}
\end{equation}
which is a diverging sum. A rational person, N. Bernoulli argued,
should therefore be willing to pay any price for a ticket in this
lottery. In reality, however, people are rarely willing to pay more
than \$10, which constitutes the paradox. 

Reactions to the paradox include the following:

Even though the expected payout is infinite, there is not an infinite
amount of money or goods in the world to pay up. So the lottery is not
realistic \citep{Cramer1728}. If the payouts are limited to some
realistic value, then the lottery's expected payout is drastically
reduced. For example the 31$^{\text{st}}$ term in the sum
\eref{ensemble} comes from a payout of about \$$10^9$, so limiting
payouts to \$$10^9$ reduces the expected payout from \$$\infty$ to
\$15. Similarly, one could argue that it is only too sensible to ignore
events with a probability of the order of $10^{-9}$ \citep{Menger1934}.

Another argument is that no one would offer such a lottery because it
carries an infinite expected loss for the lottery-seller, which makes
it irrelevant \citep{Samuelson1960}.

\section{Bernoulli's resolution}
\label{Bernoulli's}

The quantity calculated in \eref{ensemble} is usually called an
``expected'' payout. But since it fails to capture the reality of the
situation its conceptual validity must be
questioned. \cite{Bernoulli1738} noted\\ 
``\S1 Ever since mathematicians first began to study the measurement of
risk there has been general agreement on the following proposition:
Expected values are computed by multiplying each possible gain by the
number of possible cases where, in this theory, the consideration of
cases which are all of the same probability is insisted upon.''

Indeed, \cite{Huygens1657} had postulated: ``if any one should put 3
shillings in one hand without telling me which, and 7 in the other,
and give me choice of either of them; I say, it is the same thing as
if he should give me 5 shillings...'' This concept of expectation is
agnostic regarding fluctuations, which is harmless only if the
consequences of the fluctuations, such as associated risks, are
negligible. This is usually the case in small-stakes recreational
gambling as considered in the earliest studies of chance by
\cite{dePacioli1494}, Cardano \citep{Ore1953}, and
\cite{FermatPascal1654}, mentioned in Sec.~\ref{Origins}, but it is
not the case in the St. Petersburg paradox. Noticing that the ability
to bear risk depends not only on the risk but also on the riskbearer's
resources, \cite{Bernoulli1738} wrote under \S3:\\ ``If I am not wrong
then it seems clear that all men cannot use the same rule to evaluate
the gamble. The rule established in \S1 must, therefore, be
discarded.''

Bernoulli, and shortly before him \cite{Cramer1728}, drew
attention to psychological and behavioral issues involved in the
evaluation of the proposed lottery. The desirability or ``utility''
associated with a financial gain, they argued, depends not only on the
gain itself but also on the wealth of the person who is making this
gain. Instead of computing the expectation value of the monetary
winnings, they proposed to compute instead the expectation value of
the gain in utility. To this end the utility function $u(w)$ was
introduced, which specifies the utility of a wealth of \$$w$.

Since an extra dollar is generally worth less to a rich person than to
a poor person, $u(w)$ is assumed to be concave, such that
$\frac{du(w)}{dw}$ is monotonically decreasing. While exceptional
circumstances can render this assumption invalid (Bernoulli cites an
imprisoned rich man who only needs another 2,000 ducats to buy his
freedom), it is well confirmed behaviorally. Otherwise $u(w)$ is only
loosely constrained. \cite{Bernoulli1738} suggested the logarithmic
function $u_B(w)=\ln(w)$, while \cite{Cramer1728} had proposed using
$u_C(w)=\sqrt{w}$ instead. Bernoulli's proposition of the logarithm
was based on the intuition that the increase in wealth should
correspond to an increase in utility that is inversely proportional to
the wealth a person already has, $\frac{du}{dx}=\frac{1}{x}$, whose
solution is the logarithm.
 
\cite{Bernoulli1738} thus ``discarded the rule'' (for calculating
expected gains in wealth) by replacing the object whose expectation
value was to be calculated. Instead of gains in wealth, he decided to
focus on the expectation of gains in some function of wealth.

In Sec.~\ref{Resolution} we will also discard the rule established in
\S1 of \citep{Bernoulli1738}, but not by replacing the object whose
average is to be calculated, \ie not by replacing plain monetary gains
by a function of those gains. Instead we will replace the type of
average, using the time average instead of the ensemble average. This
is necessary because the system under investigation (the dynamics of
monetary wealth) is not ergodic, as will be shown in
Sec.~\ref{Resolution}. In doing so we will critique the implicit
considering of multiple imagined systems, or parallel universes.

But first, applying Bernoulli's reasoning, we compute the expected
change in logarithmic utility, $\ave{\Delta u_B}$, due to playing the
lottery, given the initial wealth $\$w$ and the cost of a ticket in
the lottery $\$c$,
\begin{equation}
 \ave{\Delta u_B}=\sum_{n=1}^\infty \left(\frac{1}{2}\right)^{n}
 \left(\overbrace{\ln(w-c+2^{n-1})}^{\text{Utility after the game}}-
 \underbrace{\ln(w)}_{\text{Utility before the game}}\right).
\elabel{utility_change}
\end{equation}
This sum converges (as long as each individual term is finite), as is
readily shown using the ratio test. Depending on $w$ and $c$, the
quantity can be positive or negative, reflecting expected gain or loss
of utility. Assuming that potential lottery players base their
decisions not on the expected monetary gain but instead on the
expected gain in usefulness, and that that usefulness is appropriately
represented by $u_B$, the paradox is thus resolved.

It is dissatisfying that this resolution of the paradox relies on a
function $u(w)$ that is postulated and, in the framework of Cramer and
Bernoulli cannot be derived from more fundamental
considerations. Disagreements on whether the assumptions (the
characteristics of diminishing marginal utility of wealth) are
realistic are difficult to settle. Anticipating this objection,
\cite{Bernoulli1738} -- Daniel being perhaps less mathematician than
scientist -- appealed to observations: ``Since all our propositions
harmonize perfectly with experience it would be wrong to neglect them
as abstractions resting upon precarious hypotheses.''

The responses to the paradox mentioned at the end of
Sec.~\ref{lottery} are similarly dissatisfying -- they address the
relevance of the problem and argue that it would never really arise,
but they do not resolve it. Since the paradoxical aspect is the
behavior of real people, however, these arguments are valid, and all
means of disposing of the paradox could be similar in character.

While Bernoulli's observations of human risk aversion and even the
functional form he proposed for modeling these are ``correct'' in a
specific sense elaborated in Sec.~\ref{Resolution}, these behavioral
regularities have a physical reason that Bernoulli failed to point
out. In fact, it appears that he was not aware of this physical
reason, which justifies only $u_B(w)=\ln(w)$. \cite{Bernoulli1738} did
not consider the logarithmic form of utility essential and wrote of
Cramer's work, which uses $u_C(w)=\sqrt{w}$: ``Indeed I have found
his theory so similar to mine that it seems miraculous that we
independently reached such close agreement on this sort of subject.''

\section{Ergodicity}
\label{Ergodicity}
The question of ergodicity in stochastic systems is concerned with a
conceptual choice in giving meaning to quantitative probabilities. It
can be argued that it is meaningless to assign a probability to a
single event, and that any decision regarding a single event must
resort to intuition or morals. For mathematical guidance the event has
to be embedded within other similar events. \cite{FermatPascal1654}
chose to embed within parallel universes, but alternatively -- and
often more meaningfully -- we can embed within time. The concept of a
decision regarding a single isolated event, whether probabilistic or
not, seems dubious: how do we interpret the premise of isolation?
Surely the event is part of a history. Does the individual making the
decision die immediately after the event? In general the consequences
of the decision will unfold over time.

The origins of ergodic theory lie in the mechanics of gases
\citep{Uffink2004}. One is interested in large-scale effects of the
molecular dynamics, \ie in the thermodynamic variables. For instance,
the macroscopic pressure of a gas is a rate per area of molecular
momentum transfer to a container wall, averaged over an area that is
large compared to the typical distance between molecules and over a
time that is long compared to the typical interval between
molecular impacts in the area.

Since the number of particles is large and collisions are possible,
however, it is practically not possible to explicitly solve the
microscopic equations of motion. Full information about the state $\x$
(positions and momenta of all molecules) is not available, and
the time average, for instance of momentum transfer to a container
wall, cannot be computed directly. \cite{Boltzmann1871b} and
\cite{Maxwell1879} independently replaced the physically required time
average by the average over an ensemble of appropriately weighted
states $\x$, making use of Huygens' expectation value. The weight of
the different states $\x$ in the ensemble was postulated and
subsequently justified empirically by comparing predictions to
observations.

The key rationale behind this dramatic step is that the systems
considered are in equilibrium: the macroscopic variables of interest
do not change in time, and microscopic fluctuations obey detailed
balance, see \eg \citep{vanKampen1992}. Under these strict conditions,
time has little tangible effect, and we may get away with disregarding
it completely. Nonetheless, both Boltzmann and Maxwell were concerned
that for mathematical convenience they were using the {\it a priori}
irrelevant ensemble average.

Specifically, when \cite{Boltzmann1871b} suggested to treat a gas as a
collection of many systems, namely sub-volumes which can be thought of
as a probabilistic ensemble, he warned that using this ``trick'' means
``to assume that between [...]  the various [...]  systems
\underline{no interaction ever occurs}.'' The requirement of
absolutely no interaction between a collection of systems is
equivalent, in practical terms, to the non-existence of all these
systems from each other's perspectives -- if systems A and B cannot
ever interact in any way, then to system A, for all practical
purposes, system B does not exist, and vice versa. Another way of
putting this is that systems A and B are parallel universes.

Assuming the validity of this procedure is known as the ergodic
hypothesis. It is permissible under strict conditions of stationarity,
see \eg \cite{GrimmetStirzaker2001}, Ch.~9.5.  These conditions were
understood long after the St. Petersburg paradox had been introduced
\citep{Birkhoff1931,Birkhoff1931b,vonNeumann1932A,vonNeumann1932B}.

Much of the literature on ergodic systems is concerned with
deterministic dynamics, but the basic question whether time averages
may be replaced by ensemble averages is equally applicable to
stochastic systems, such as Langevin equations or lotteries. The
essence of ergodicity is the question whether the system when observed
for a sufficiently long time $t$ samples all states in its sample
space in such a way that the relative frequencies $f(\x,t)d\x$ with
which they are observed approach a unique (independent of initial
conditions) probability, $P(\x)d\x$,
\begin{equation}
\lim_{t \to \infty} f(\x, t) = P(\x). 
\elabel{ergodic}
\end{equation}
If this distribution does not exist or is not unique, the time average,
$\bar{A}=\lim_{T\to\infty}\frac{1}{T}\int_0^TA(\x,t) dt$, of an observable
$A$ cannot be computed as an ensemble average in Huygens' sense,
$\ave{A}=\int_\x A(\x,t) P(\x) d\x$. The generic variable $A$ may depend
on time only through its state dependence, or it may have explicit
time dependence. If $P(\x)$ is not unique, then the time average of
$A$ generally depends on initial conditions. If $P(\x)$ does not
exist, there may still be a unique time average. A unique ensemble
average may also still exist -- although we cannot find $P(\x)$ from
\eref{ergodic}, we may be able to determine $\tilde{P}(\x, t)$, the
proportion of systems in an ensemble that are in state $\x$ at time
$t$, and compute the ensemble average as $\ave{A}(t)=\int_\x A(\x,t)
\tilde{P}(\x,t) d\x$. In special cases the time dependencies of
$A(\x,t)$ and $\tilde{P}(\x,t)$ can be such that $\ave{A}(t)$ does not
actually depend on time. However, there is no guarantee in these cases
that the time average and ensemble average will be identical.

Growth factors in the St. Petersburg lottery are such a special
case. In Sec.~\ref{Resolution} it will be shown that while the ({\it a
  priori} irrelevant) ensemble-average winnings from the lottery
diverge, the time-average winnings do not. Mathematically the end
result is identical to the result obtained by Bernoulli (although see
Sec.~\ref{Relation}(\ref{Menger})). Conceptually, however, the
arbitrary utility (arbitrary in the sense that it depends on personal
characteristics), is replaced by an argument based on the physical
reality of the passing of time and the fact that no communication or
transfer of resources is possible between the parallel universes
introduced by Fermat.

\subsection{The economic context}
\label{economic}
To repeat, the quantity in \eref{ensemble} is accurately interpreted
as follows: imagine a world of parallel universes defined such that
every chance event splits our current universe into an ensemble
containing member-universes for every possible outcome of the chance
event. We further require that the proportion of members of the ensemble
corresponding to a particular outcome is the probability of that
outcome. In this case, if we give the same weight to every
member-universe, \eref{ensemble} is the ensemble average over all
possible future states of the universe (\ie states after the game).

Of course, we are not \apriori interested in such an average because
we cannot realize the average payout over all possible states of the
universe. Following the arguments of Boltzmann and Maxwell, this
quantity is meaningful only in two cases.

\begin{enumerate}
\item The physical situation could be close to an ensemble of
  non-interacting systems which eventually share their resources. This
  would be the case if many participants took part in independent
  rounds of the lottery, with an agreement to share their payouts,
  which would be a problem in portfolio construction, and different
  from Bernoulli's setup\footnote{This situation is equivalent to a
    single person buying tickets for many parallel rounds of the
    lottery. In the limit of an infinitely rich person and a finite
    ticket price it can be shown that it is advisable to invest all
    funds in such independent lotteries.}.
\item The ensemble average could reflect the time-average performance
  of a single participant in the lottery. Whereas time averages in
  statistical mechanics are often difficult to compute (hence the
  ergodic hypothesis), the simplicity of the St. Petersburg lottery
  makes it easy to compute them and see how they differ from
  ensemble averages.
\end{enumerate}
Thus neither case applies to the St. Petersburg lottery, and the
ensemble average is irrelevant to the decision whether to buy a
ticket.

In general, to realize an average over the ensemble, ensemble members
must exchange resources, but this is often impossible, so we must be
extremely careful when interpreting ensemble averages of the type of
\eref{ensemble}.

\section{Resolution using non-ergodicity}
\label{Resolution}
The resolution of the St. Petersburg paradox presented in this section
builds on the following alternative conceptualization:
\begin{itemize}
\item
\underline{Rejection of parallel universes:} To the individual who
decides whether to purchase a ticket in the lottery, it is irrelevant
how he may fare in a parallel universe. Huygens' (or Fermat's)
ensemble average is thus not immediately relevant to the problem.
\item
\underline{Acceptance of continuation of time:} The individual
regularly encounters situations similar to the St. Petersburg
lottery. What matters to his financial well-being is whether he makes
decisions under uncertain conditions in such a way as to accumulate
wealth {\it over time}.
\end{itemize}
Similarly, in statistical mechanics Boltzmann and Maxwell were
interested in momentum accumulated {\it over time}. Because they
considered equilibrium systems, where time is largely irrelevant, they
hypothesized that time averages could be replaced by ensemble
averages. However, a person's wealth is not usually in equilibrium,
nor even stationary: on the time scales of interest, it generally
grows or shrinks instead of fluctuating about a long-time average
value. Therefore the ergodic hypothesis does not apply
\citep{Peters2010}. Consequently, there is no reason to believe that
the expected (ensemble-average) gain from the lottery coincides with
the time-average gain. That they are indeed different will be shown in
this section by explicitly calculating both.

The accumulation of wealth over time is well characterized by an
exponential growth rate. To compute this, we consider the factor $r_i$
by which a player's wealth changes in one round of the
lottery\footnote{One ``round'' of the lottery is used here to mean one
  sequence of coin tosses until a tails-event occurs. Throughout, an
  index $i$ refers to such rounds, whereas $n$ indicates waiting times
  -- the number of times a coin is tossed in a given round.},
\begin{equation} 
r_i=\frac{w-c+m_i}{w},
\elabel{ri}
\end{equation} 
where, as in \eref{utility_change}, \$$w$ is the player's wealth
before the round of the lottery, \$$c$ is the cost of a lottery
ticket, and \$$m_i$ is the payout from that round of the lottery. To
convert this factor into an exponential growth rate $g$ (so that
$\exp(gt)$ is the factor by which wealth changes in $t$ rounds of the
lottery), we take the logarithm, $g_i=\ln(r_i)$.\footnote{The
  logarithm is taken to facilitate comparison with Bernoulli's
  analysis. As long as it acts on the average, as opposed to being
  averaged over, it does not change the convergence properties.}

\subsection{Ensemble Average}
\label{Average}
\begin{theorem}
The ensemble-average exponential growth rate in the St. Petersburg lottery is
$\ave{g}=\ln \left(\sum_{n=1}^{\infty}\left(\frac{1}{2}\right)^n\frac{w-c+2^{n-1}}{w}\right).$
\end{theorem}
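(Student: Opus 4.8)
The plan is to read ``ensemble-average exponential growth rate'' as the exponential rate at which the \emph{ensemble-average wealth} grows under repeated play, and then to evaluate that rate by a short direct computation. First I would recall from \eref{ri} that one round of the lottery multiplies a player's wealth $\$w$ by the random factor $r=(w-c+m)/w$, where the payout $m$ equals $\$2^{n-1}$ with probability $(1/2)^n$; hence the ensemble average of a single round's change factor is $\ave{r}=\sum_{n=1}^{\infty}(1/2)^n\,(w-c+2^{n-1})/w$. This is precisely the quantity that must appear inside the logarithm in the claimed formula, so the remaining task is to connect it to a \emph{rate}.

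Next I would pass from one round to $t$ rounds using the ensemble picture of Sec.~\ref{economic}: many statistically independent participants (equivalently, the parallel universes of Fermat) who pool and redistribute their holdings after each round. The common wealth after one round is then $w\ave{r}$, and, because successive rounds are independent and the resource-sharing step restores a common wealth before each new round, the ensemble-average wealth after $t$ rounds is $w\,\ave{r}^{\,t}$. Writing this as $w\exp(\ave{g}\,t)$, as in the footnote to \eref{ri} where $\exp(gt)$ denotes the $t$-round change factor, and solving for the rate gives $\ave{g}=\ln\ave{r}=\ln\!\left(\sum_{n=1}^{\infty}(1/2)^n\,(w-c+2^{n-1})/w\right)$, which is the asserted expression.

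Finally I would record what the statement is really for. The series inside the logarithm contains the subseries $\sum_n (1/2)^n 2^{n-1}/w=\sum_n 1/(2w)$, which diverges exactly as the bare expected payout \eref{ensemble} does, so in fact $\ave{g}=+\infty$; placing a logarithm around the ensemble average does not cure the divergence, consistent with the footnote's remark that the logarithm ``does not change the convergence properties'' when it acts on the average rather than being averaged over. Stated this way, the result sets up the contrast with the (finite) time-average growth rate computed in the next subsection.

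The step I expect to be the main obstacle is the second one: justifying that the ensemble-average $t$-round factor factorizes as $\ave{r}^{\,t}$, i.e.\ that the logarithm may legitimately be placed \emph{outside} the sum. This is genuinely different from averaging $g_i=\ln r_i$ over the ensemble, and it rests on the ``rate of the average'' reading of $\ave{g}$ together with the independence of rounds and the resource-sharing construction of Sec.~\ref{economic}; getting this bookkeeping precise — rather than the trivial algebra — is where the care is needed.
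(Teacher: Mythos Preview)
Your proposal is correct and reaches the stated formula, but the route differs in emphasis from the paper's. The paper's proof is more direct: it defines the finite-ensemble average $\ave{r}_N=\frac{1}{N}\sum_{i=1}^N r_i$ over $N$ parallel players in a \emph{single} round, rewrites this as a sum over waiting times $n$ weighted by empirical frequencies $k_n/N$, lets $N\to\infty$ so that $k_n/N\to p_n=(1/2)^n$, and then simply takes the logarithm of the resulting $\ave{r}$ to express it as a rate. There is no detour through $t$ rounds and no appeal to resource-sharing; the passage from factor to rate is taken by definition, mirroring $g_i=\ln r_i$ for individual realizations. Your $t$-rounds argument with pooling is a legitimate way to \emph{motivate} why $\ln\ave{r}$ deserves to be called the ensemble-average exponential growth rate, and it sharpens the contrast with the time average, but it is extra scaffolding the paper does not erect --- so the ``main obstacle'' you anticipate is not actually a step the paper's proof requires. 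Conversely, the explicit finite-$N$ frequency argument that you bypass by writing the expectation directly is precisely what the paper uses to make this computation structurally parallel to the finite-$T$ construction in Theorem~\ref{time_theorem}.
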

\begin{proof}
First, we consider the ensemble-average growth factor, and begin by
averaging over a finite sample of $N$ players, playing the lottery
in parallel universes, \ie in general players will
experience different sequences of coin tosses,
\begin{equation}
\ave{r}_N=\frac{1}{N}\sum_{i=1}^N r_i,
\elabel{ensemble_growth_1}
\end{equation}
which defines the finite-sample average $\ave{\cdot}_N$. We change the
summation in \eref{ensemble_growth_1} to run over the geometrically
distributed number of coin tosses in one round, $n$,
\begin{equation}
\ave{r}_N=\frac{1}{N}\sum_{n=1}^{n_N^{\text{max}}} k_n r_n,
\elabel{ensemble_growth_2}
\end{equation}
where $k_n$ is the frequency with which a given $n$, \ie the first
tails-event on the $n^{\text{th}}$ toss, occurs in the sample of $N$
parallel universes, and $n_N^{\text{max}}$ is the highest $n$ observed
in the sample. Letting $N$ grow, $k_n/N$ approaches the probability of
$n$, and we obtain a simple number, the ensemble-average growth factor
$\ave{r}$, rather than a stochastic variable $\ave{r}_N$
\begin{align}
\ave{r}=\lim_{N\to\infty}\ave{r}_N&=\lim_{N\to\infty}\sum_{n=1}^{n_N^{\text{max}}} \frac{k_n}{N} r_n \elabel{ensemble_growth_3}\\
&=\sum_{n=1}^\infty p_n r_n.\nonumber
\end{align}
The logarithm of $\ave{r}$ expresses this as the ensemble-average
exponential growth rate. Using \eref{ri} and writing the probabilities
explicitly, we obtain
\begin{equation}
\ave{g}=\ln \left(\sum_{n=1}^{\infty}\left(\frac{1}{2}\right)^n\frac{w-c+2^{n-1}}{w}\right).
\elabel{ensemble_growth}
\end{equation}
\end{proof}
Since the ensemble-average payout from one round in the lottery
diverges, \eref{ensemble}, so does this corresponding ensemble-average
exponential growth rate \eref{ensemble_growth}. 

\subsection{Time average}
\begin{theorem}
\label{time_theorem}
The time-average exponential growth rate in the St. Petersburg lottery
is $\bar{g}=\sum_{n=1}^\infty \left(\frac{1}{2}\right)^{n}
\ln(w-c+2^{n-1})-\ln(w)$.
\end{theorem}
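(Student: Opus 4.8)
The plan is to interpret the time average as a long-run sample average over a temporal sequence of statistically independent rounds, and then to reduce the statement to the strong law of large numbers; the only genuinely delicate point is checking that the relevant series is finite.

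First I would spell out the dynamical picture behind \eref{ri} and its footnote: a single player plays rounds $i=1,2,\ldots,T$ one after another, and in round $i$ his wealth is multiplied by $r_i=(w-c+m_i)/w$, so that after $T$ rounds the cumulative growth factor is $\prod_{i=1}^{T} r_i=\exp\left(\sum_{i=1}^{T} g_i\right)$ with $g_i=\ln r_i$. Since each round is a fresh play of the same lottery, the $g_i$ are independent and identically distributed, with $g_i=\ln\frac{w-c+2^{n-1}}{w}$ occurring with probability $p_n=(1/2)^n$, the probability that the round lasts $n$ tosses. By definition the time-average exponential growth rate is $\bar{g}=\lim_{T\to\infty}\frac{1}{T}\sum_{i=1}^{T} g_i$.

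The key step is to verify that the series $\sum_{n=1}^{\infty}p_n\ln\frac{w-c+2^{n-1}}{w}$ converges absolutely (assuming, as the paper does for \eref{utility_change}, that each individual term is finite, \ie that $w-c+1>0$). For large $n$ one has $\ln(w-c+2^{n-1})=(n-1)\ln 2+\ln\left(1+(w-c)2^{-(n-1)}\right)\sim (n-1)\ln 2$, so the $n$-th term of the series is, in absolute value, of order $n\,2^{-n}$, and the ratio test --- exactly as invoked for \eref{utility_change} --- gives absolute convergence. This series is therefore the (finite) expectation of $g_i$, so the strong law of large numbers applies and yields $\bar{g}=\sum_{n=1}^{\infty}p_n\ln\frac{w-c+2^{n-1}}{w}$ almost surely.

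Finally, because $\sum_{n=1}^{\infty}(1/2)^n=1$, I would split the logarithm inside the (now absolutely convergent) series, $\ln\frac{w-c+2^{n-1}}{w}=\ln(w-c+2^{n-1})-\ln(w)$, and rearrange to obtain $\bar{g}=\sum_{n=1}^{\infty}\left(\frac{1}{2}\right)^{n}\ln(w-c+2^{n-1})-\ln(w)$, which is the claim. I expect the convergence check to be the main obstacle --- or rather the conceptual heart of the matter: it is exactly what distinguishes this finite time-average growth rate from the divergent ensemble-average growth factor of \eref{ensemble_growth}, the point being that here the exponentially growing payouts $2^{n-1}$ are passed through the logarithm \emph{before} averaging rather than after. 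A secondary caveat is that treating the $g_i$ as identically distributed presumes that each round is evaluated at the same reference wealth $w$ (no compounding across rounds); relaxing this would call for a more careful stochastic-growth argument but leaves unchanged the growth rate relevant to the ticket-pricing decision at wealth $w$.
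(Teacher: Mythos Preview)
Your proof is correct and follows essentially the same route as the paper: both express the time-average growth rate as $\bar{g}=\lim_{T\to\infty}\frac{1}{T}\sum_{i=1}^T\ln r_i$ and identify it with $\sum_n p_n\ln r_n$. The paper does the identification by regrouping the finite product by outcome $n$ and letting $k_n/T\to p_n$, while you invoke the strong law of large numbers directly after verifying the integrability condition --- a slightly more rigorous packaging of the same argument.
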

\begin{proof}
The time average is computed in close analogy to the way the ensemble
average was computed. After a finite number $T$ of rounds of the game
a player's wealth reaches\footnote{A possible objection to this
  statement is discussed below, starting on p.~\pageref{product_objection}.}
\begin{equation}
w(T)=w \prod_{i=1}^T r_i.
\end{equation}
The $T^{\text{th}}$ root of the total fractional change,
\begin{equation}
\bar{r}_T=\left(\prod_{i=1}^T r_i\right)^{1/T},
\end{equation}
which defines the finite-time average $\bar{r}_T$, is the factor by
which wealth has grown on average in one round of the lottery over the
time span $T$. We change the product to run over $n$,
\begin{equation}
\bar{r}_T=\left(\prod_{n=1}^{n^{\text{max}}_T} r_n^{k_n}\right)^{1/T},
\elabel{time_factor_1}
\end{equation}
where $k_n$ is the frequency with which a given $n$ occurred in the
sequence of $T$ rounds, and $n^{\text{max}}_T$ is the highest $n$
observed in the sequence. Letting $T$ grow, $k_n/T$ approaches the
probability of $n$, and we obtain a simple number, the
time-average growth factor $\bar{r}$, rather than a stochastic
variable $\bar{r}_T$
\begin{align}
\bar{r}&=\lim_{T\to \infty}\bar{r}_T=\lim_{T\to \infty}\prod_{n=1}^{n^{\text{max}}_T} r_n^{k_n/T}\elabel{time_factor}\\
&=\prod_{n=1}^\infty r_n^{p_n}.\nonumber
\end{align}

The logarithm of $\bar{r}$ expresses this as the time-average
exponential growth rate. Using \eref{ri} and writing the probabilities
explicitly, we obtain
\begin{align}
\bar{g}&=\ln\left(\prod_{n=1}^\infty r_n^{p_n}\right) \elabel{time_growth}\\
&=\sum_{n=1}^\infty p_n \ln r_n \nonumber\\
&=\sum_{n=1}^\infty \left(\frac{1}{2}\right)^{n} \left(\ln(w-c+2^{n-1})-\ln(w)\right). \nonumber
\end{align}
\end{proof}
The final line of \eref{time_growth} is identical to the right-hand
side of \eref{utility_change}.

Again the quantity can be positive or negative, but instead of the
ensemble average of the change in utility we have calculated the
time-average exponential growth rate of a player's wealth without any
assumptions about risk preferences and personal characteristics. If
the player can expect his wealth to grow over time, and he has no
other constraints, he should play the game; if he expects to lose
money over time, he should not play. The loci of the transition
between growth and decay, where $\bar{g}=0$ define a line in the $c$
{\it vs.} $w$ plane, which is shown in
\fref{g_bar}. \Eref{time_growth} depends on the player's wealth --
keeping $c>1$ fixed, $\bar{g}$ initially increases with $w$, see inset
of \fref{g_bar} for the example of $c=2$. This is because the wealthy
player keeps most of his money safe, and a loss does not seriously
affect his future ability to invest. For the very wealthy player,
neither a win nor a loss is significant, and the time-average
exponential growth rate asymptotes to zero as $w\to\infty$. At the
other extreme, a player whose wealth $w\leq c-1$ risks bankruptcy,
which in a sense means the end to his economic life, and $\lim_{w\to
  (c-1)^+}\bar{g}=-\infty$.  

\begin{figure}
\includegraphics[width=.9\textwidth]{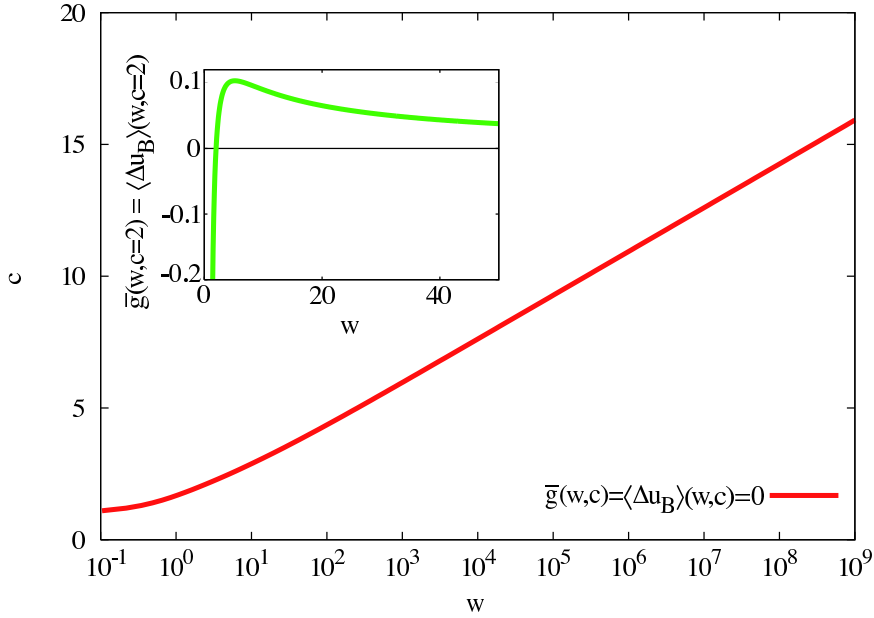}
\caption{\Eref{time_growth} (or \eref{utility_change}) defines a
  relationship between $w$ and $c$, where $\bar{g}(w,c)=0$, \ie the
  player breaks even over time (or his ensemble-average logarithmic
  utility change is zero) if he pays $\$c$ given his wealth
  $\$w$. \underline{Inset:} Time-average exponential growth rate (or
  ensemble-average logarithmic utility change), $\bar{g}(w,
  c=2)=\ave{\Delta u_B}(w, c=2)$, for the St. Petersburg lottery as a
  function of wealth, $\$w$, with a ticket price of $\$c=\$2$. If the
  player risks bankruptcy by purchasing a ticket, $\bar{g}(w,c) \to
  -\infty$. To the infinitely wealthy player a gain or loss is
  irrelevant, and $\bar{g}(w,c) \to 0$.}  \flabel{g_bar}
\end{figure}

So \eref{time_growth} can also be considered a criterion for how much
risk a person should take. The cost of the ticket, $c$, is the
exposure to the lottery. For fixed (positive) $w$ ``buying'' a ticket
is always advisable if $c=0$, see
Sec.~\ref{Relation}(\ref{Menger}). As $c$ increases, $\bar{g}$ will
eventually become negative as the exposure, or risk, becomes too
large.  The time resolution discourages entering into any gamble where
bankruptcy, \ie zero or negative wealth after the game, occurs with
non-zero probability. In these cases individual terms in the sum in
\eref{time_growth} are undefined.

\label{product_objection}
\Eref{time_growth}, may seem an unnatural criterion for the following
reason: the lottery is played only once with wealth $w$. By the next
round, wealth has changed by $m_i-c$, and the situation has to be
re-evaluated. So in reality, although we may buy tickets at the same
price repeatedly, a combination of different $\bar{g}$ (resulting from
different ``initial'' wealths $w$) will be realized.  However, over a
sufficiently long time, we must assume that we will face equivalent
decisions again, and thus play equivalent lotteries again. Let's
consider the result of playing many rounds in different lotteries,
$\prod_i^T r_i$. Because of commutativity we can rearrange the factors
in the product so that the first $T'$ factors correspond to the rounds
in which we face equivalent lotteries (for instance we have the same
wealth, and the tickets are offered at the same price), and the
remaining $T-T'$ factors refer to different situations,
\begin{align}
\prod_{i=1}^{T}r_i &=\prod_{j=1}^{T'} r_j \prod_{m=T'+1}^Tr_m.
\end{align}
Whatever $\prod_{m=T'+1}^Tr_m$ may be, the steps in \eref{time_factor}
apply to the first product, and the sign of the quantity in
\eref{time_growth}, which determines whether the first product is
greater or smaller than one, is a good criterion for deciding whether
to buy a ticket.

It is instructive to calculate the time-average exponential growth
rate in another way.  Line 2 of \eref{time_growth} looks like an
ensemble-average exponential growth rate, obtained by computing
exponential growth rates for individual systems and then averaging
those over the ensemble,
\begin{equation}
\lim_{N\to\infty}\frac{1}{N}\sum_{i=1}^N \ln(r_i).
\elabel{ensemble_wrong}
\end{equation}
The reason why this quantity is not the ensemble average but the time
average is subtle. There is no limit $T \to \infty$, so how can this
be a time average?

Averages extract deterministic parameters from stochastic
processes. The ensemble average does this by considering an infinity
of parallel universes, and the time average does it by considering
infinitely many time steps. But to find the time average it is not
necessary for time itself to approach infinity. Instead, the unit of
time can be rescaled. It is only necessary that all possible scenarios
occur exactly with the appropriate frequencies during the sampling
time. As long as the effect of time -- of events happening
sequentially -- is accounted for, this will lead to the time average.

We have used one round of the lottery as one time unit. Thus the
return from one time unit will be one of the possible returns
$r_n$. If we observe only one time unit, the best estimate for the
time-average return would be the return that happened to be realized
in that time step. An approximate estimate for the time-average
exponential growth rate is thus $\gest_1\approx r_1-1$\footnote{The
  approximation $\ln(r_1)\approx r_1-1$ is permissible here because we
  will consider infinitesimal time steps such that
  $\exp(\bar{g}dt)=1+\bar{g}dt$ will be exact.}.

To improve this estimate, we pick $q$ returns $r_j$ at random and in
agreement with the probabilities $p_n$, and let each return act for
$1/q$ time units\footnote{The subscript $j$ is used here instead of
  $i$ because it refers not to one round but to sub-intervals of one
  round.}. The total time that passes during the experiment is kept
fixed but we separate it into $q$ sub-intervals of time. The result
will be
\begin{equation}
\gest_q=\sum_{j=1}^q(r_j^{1/q}-1),
\end{equation}
The proportion of sub-intervals during which return $r_n$ is realized
will approach $p_n$ as $q\to\infty$. In this limit we can therefore
replace the sum over time steps by a sum over $n$ as follows
\begin{align}
\gest_\infty&=\lim_{q\to\infty}\sum_{j=1}^q(r_j^{1/q}-1)\\
&=\sum_{n=1}^{\infty} \lim_{q \to\infty} k_n (r_n^{1/q}-1)\\
&=\sum_{n=1}^{\infty} p_n  \lim_{q \to\infty}q (r_n^{1/q}-1),
\end{align}
where $k_n$ once again is the frequency with which a given $n$ occurs,
now in the sample of $q$ sub-intervals. Using the definition of the
logarithm, $\ln(r_n)\equiv\lim_{q \to\infty} q (r_n^{1/q}-1)$ yields
\begin{equation}
\lim_{q\to\infty}\sum_{j=1}^q(r_j^{1/q}-1)=\sum_{n=1}^{\infty} p_n \ln r_n,
\end{equation}
meaning that the time-average exponential growth rate, derived by
splitting a time unit into infinitely many sub-intervals and playing
through all possible scenarios in these sub-intervals can be written
as the expectation value of the logarithm of returns. A limit which is
equivalent to the apparently missing limit $T\to\infty$ is implied by
the logarithm and evaluated before the explicit limit in
\eref{ensemble_wrong}. Thus \eref{ensemble_wrong} is an ensemble
average of a time average, which is nothing but a time average.

\section{Relation to Bernoulli's resolution}
\label{Relation}
\Eref{time_growth} is mathematically equivalent to Bernoulli's use of
logarithmic utility. Bernoulli argued behaviorally that instead of the
expectation value of monetary gain, the expectation value of the gain
in a loosely constrained function (the utility) of wealth should be
considered. One of the allowed functions is the logarithm, which has
the special property of encoding the multiplicative nature common to
gambling and investing in a linear additive object, the expectation
value
\begin{equation}
\sum_{n=1}^{\infty} p_n\ln r_n=\ln\left(\lim_{T\to \infty}\left(\prod_{i=1}^T r_i \right)^{1/T}\right).
\elabel{relation}
\end{equation}
 Inadvertently, by postulating logarithmic utility (left-hand side of
 \eref{relation}), Bernoulli replaced the ensemble-average winnings,
 with the time-average exponential growth rate in a multiplicative
 non-ergodic stochastic process (right-hand side of \eref{relation}).

Bernoulli did not make the time argument, as is evident from his
acceptance of Cramer's square-root utility, which does not have the
property of \eref{relation}: $\sum_i^{\infty} p(r_i) \sqrt{r_i}$
cannot be written as a similar product. This is problematic because
the arbitrariness of utility can be abused to justify reckless
behavior, and it ignores the fundamental physical limits, given by
time irreversibility, to what can be considered reasonable. But
because Bernoulli's early work postulated the logarithm, many
consequences of \eref{time_growth} have already been discussed in the
literature under the heading of ``logarithmic utility''.

A different heading for these results is ``Kelly criterion''
\citep{Kelly1956,CoverThomas1991,Thorp2006}. In contrast to
ensemble-average exponential growth rates, which often diverge (for
instance with leverage) time-average exponential growth rates can be
optimized \citep{Peters2009,Peters2010}. \cite{Kelly1956} used this
fact to optimize wager sizes in a hypothetical horse race using
private information. While he refrained from using utilities because
he deemed them ``too general to shed any light on the specific
problems'' he considered, he did not point out the fundamental
difference in perspective his treatment implies: in essence, arbitrary
utility functions are replaced by the physical truth that time cannot
be reversed. My aim here is to emphasize this difference in
perspective. It is crucial that logarithmic utility from this point of
view is not a utility at all. Rather, the logarithm accounts for the
multiplicative nature of the process: the ensemble average of the
logarithm of growth factors equals the logarithm of the time average
of growth factors.

Comparing \eref{time_growth} and \eref{utility_change}, it is tempting
to say that the time average justifies logarithmic utility. I advise
against this interpretation because it conflates physical concepts of
time with behavioral concepts of usefulness. Any utility function
other than the logarithm leads to recommendations that do not agree
with the time perspective.

\subsection{Menger's objection to unbounded utility}
\label{Menger}
\cite{Bernoulli1738} did not actually write down
\eref{utility_change}, although it is often assumed that that was his
intention. Instead of using the criterion \eref{utility_change} he
argued in two steps ``how large a stake an individual should be
willing to venture'', pp.~26--27.
\begin{enumerate}
\item
The expected gain in utility is calculated without explicitly taking
the ticket price into account
\begin{equation}
\elabel{mistake_1}
\sum_{n=1}^\infty
\left(\frac{1}{2}\right)^n
\ln\left(w+2^{n-1}\right) - \ln(w).
\end{equation} 
\item
This is followed by the statement that ``the stake more than which
persons [...]  should not venture'' is that ticket price, $\$c$, which
satisfies
\begin{equation}
\elabel{mistake_2} \overbrace{\sum_{n=1}^\infty
  \left(\frac{1}{2}\right)^n
  \left(\ln(w+2^{n-1})-\ln(w)\right)}^{\text{expected utility gain with
    $c=0$}} -\underbrace{\left[\ln(w)-\ln(w-c)\right]}_{\text{utility loss
    at purchase}}=0.
\end{equation}
\end{enumerate}
This is not the condition that defines the line in the main panel of
\fref{g_bar} as it does not imply that the expected gain in utility,
\eref{utility_change}, is zero. In this sense \eref{utility_change} is
an inaccurate, although generally accepted and sensible,
representation of Bernoulli's work. The difference between
\eref{utility_change} and \eref{mistake_2} and the conflicting
interpretations of these equations are consequences of the
aforementioned arbitrariness of the utility framework.

\cite{Menger1934} claimed that using logarithmic utility as Bernoulli
did, \eref{mistake_1} and \eref{mistake_2}, does not resolve modified
versions of the paradox where payouts $\$f(n)$ as a function of
waiting time $n$ increase faster than according to Bernoulli's
original $f_B(n)\equiv 2^{n-1}$. Specifically, Menger considered
$f_M(n) \equiv w \exp(2^{n})-w$. Note that this function is curiously
defined in terms of the initial wealth. His first step closely mirrors
Bernoulli's first step, but then he jumps to an untenable conclusion:
\\
\begin{enumerate}
\item
\cite{Menger1934} pointed out that replacing $2^{n-1}$ in \eref{mistake_1} by
$f_M(n)$, the expected gain in logarithmic utility at zero ticket
price diverges.
\item
He argued that the paradox remains unresolved because ``it
is clear that also in the modified St. Petersburg game no normal
person will risk a large amount or even his fortune as a wager'',
p.~468, my translation, and generalized to conclude that this formally prohibits
the use of any unbounded utility function.
\end{enumerate}
The meaning of the second statement is unclear. A player who pays
``his fortune'' for a ticket in Menger's lottery and then experiences
a heads-event on the first coin toss, \ie the worst possible outcome,
will still gain since the worst-case payout,
$\$f_M(1)=\$w\exp(2)-\$w$, is more than the initial wealth, $\$w$. For
a person to risk losing anything at all, the ticket price has to be
$\$c\geq \$ f_M(1)$, far greater than the person's wealth. For a
person to risk losing his entire wealth, the ticket price has to be
greater still, $\$c\geq \$ f_M(1)+\$w=\$w\exp(2)$. But at such prices
\eref{mistake_2} is undefined.

Perhaps Menger meant that Bernoulli's condition \eref{mistake_2}
cannot be satisfied, and the price one should be willing to pay is
infinite. In that case the second part of Menger's argument implicitly
assumes that the positive divergence in the first part cannot be
offset by anything else. But as the ticket price approaches the
player's wealth, $c\to w$, the utility loss at purchase in
\eref{mistake_2} represents another divergence, implying that this
argument is inconclusive. It will now be shown to be invalid.

Menger's objection to Bernoulli could hold in the following sense: the
left-hand side of \eref{mistake_2}, using Menger's payout function,
diverges positively if $c<w$ and is undefined otherwise. But it is
never zero (or negative) -- when does it warn against the gamble? To
understand what the undefined regime signifies, one has to study the
process of divergence and compare the two infinities as they unfold.

For any finite $n_{\text{max}}$, a finite value $c<w$ does exist which
renders the corresponding partial sum zero,
\begin{equation}
\elabel{mistake_3} 
\forall n_{\text{max}}<\infty \hspace{.2cm} \exists \hspace{.2cm} c<w :\sum_{n=1}^{n_{\text{max}}}
\left(\frac{1}{2}\right)^n 2^n -\left[\ln(w)-\ln(w-c)\right]=0.
\end{equation}
To ensure positivity up to exactly $c=w$, where the expression becomes
undefined, events of zero probability have to be taken into
account. For any non-zero lower bound on probability Bernoulli's
condition can be satisfied. In this sense, in the literal original
Bernoulli-setup, values of $c\geq w$, where \eref{mistake_3} is
undefined, correspond to the recommendation not to buy a ticket, and
the paradox is resolved.

Menger's conclusion is incorrect. Bernoulli's logarithmic utility
recommends to purchase tickets as long as they cost less than the
player's wealth, implying a significant minimum gain -- a course many
a ``normal person'' may wish to pursue. The criterion could be
criticized for the opposite reason: it rejects prices that guarantee a
win, even in the worst case.

The time resolution produces the criterion in
Theorem~\ref{time_theorem}, which is equivalent to
\eref{utility_change} and not to Bernoulli's literal original
criterion \eref{mistake_2}.
Consequently, it yields a different recommendation, which may at first
appear surprising but turns out also to correspond to reasonable
behavior given the assumptions on which it is based: it recommends to
purchase a ticket at any price that cannot lead to bankruptcy. The
player could be left with an arbitrarily small positive wealth after
one round. The recommendation may be followed by a ``normal person''
because of the assumption that equivalent lotteries can be played in
sequence as often as desired. Under these conditions, irrespective of
how close a player gets to bankruptcy, losses will be recovered over
time. Of course, if these conditions are violated, the time resolution
does not apply. This last statement is another warning against the
na\"{i}ve use of mathematics, whose truth is always restricted to the
context of axioms or assumptions. Applicability reflects the degree to
which assumptions are representative of real conditions in a given
situation. While ensemble averages are meaningless in the absence of
samples (here parallel rounds), time averages are meaningless in the
absence of time (here sequences of equivalent rounds).

\section{Discussion}
\label{Discussion}

Excessive risk is to be avoided primarily because we cannot go back in
time. Behavioral aspects and personal circumstances are relevant on a
different level -- they can change and do not immediately follow from
the laws of physics.

The perspective described here has consequences far beyond the
St. Petersburg paradox, including investment decisions
\citep{Peters2009,Peters2010} as well as macro-economic processes. For
example, it is sensible for a nation striving for growth to encourage
risks that lead to occasional bankruptcies of companies and
individuals. How large a risk is macroeconomically sensible? What are
the moral implications? Does gross domestic product -- a linear sum,
similar to a sample average -- measure what one should be interested
in? While the St. Petersburg lottery is an extreme case,
\eref{time_growth} and \fref{g_bar} carry a more general message: if
net losses are possible, the negative time-average exponential growth
rate for small wealth, $w$, turns positive as $w$ increases, implying
higher exponential growth rates for larger entities. In a collection
of such entities inequality has a tendency to increase, letting large
entities dominate and monopolies arise. This can cause markets to
cease functioning, as competition is compromised or corporations
become ``too big to fail''. There is anecdotal evidence that assets in
stock markets have become more correlated in recent decades, and
effective diversification (which mimics ensembles) harder to
achieve. This would make the time perspective even more important, and the
consequences of ignoring it more dramatic.

Utility functions are externally provided to represent risk
preferences but are unable by construction to recommend appropriate
levels of risk. The framework is self-referential in that it can only
translate a given utility function into actions that are optimal with
respect to that same utility function. This can have unwanted
consequences. For example, leverage or credit represents a level of
risk which needs to be optimized, but current incentive structures in
the financial industry can encourage exceeding the optimal risk. Adam
\cite{Smith1776}, cited in \citep{Foley2006}, warned that excessive
lending -- in his case based on bills of exchange for goods in transit
-- can lead to a collapse of the credit system, followed by
bankruptcies and unemployment; insufficient lending, on the other
hand, can lead to economic stagnation, two stages which often follow
one another in a boom-bust cycle. To avoid both, good criteria for
appropriate levels of risk are needed, which the utility framework
cannot deliver. The time arguments presented here provide an objective
null-hypothesis concept of optimality and can be used to optimize
leverage under a given set of conditions \citep{Peters2010}. In the
present case, optimality based on such considerations is a good
approximation to practically optimal behavior. This is evident from
Bernoulli's work, whose justification of the almost identical result
was practical validity.

The proposed conceptual re-orientation may help reconcile economic
theory with empirical regularities, such as risk aversion, known from
behavioral economics.

It is easy to construct examples where less risk should be taken than
recommended by the criterion in \eref{time_growth}. For example, some
fraction of \$$w$ may already be earmarked for other vital use. It is
very difficult, however, to think of an example where greater risk is
beneficial. For this reason, the time-perspective is a useful tool for
finding upper limits on risk to be implemented in regulation, for
instance as margin requirements, minimum capital requirements, or
maximum loan-to-value ratios. Of course, such regulation must take
into account further arguments, but its least complex form can be
based on the time perspective.

The epistemological situation is typical of the process of concept
formation \citep{Lakatos1976}. As the conceptual context changed, in
this case from moral to predictive, the original definition of the
term ``expectation'' began to lead to paradoxical conclusions. From
today's conceptually later perspective it appears that N. Bernoulli
made a hidden assumption, namely the assumption, explicitly stated by
\cite{Huygens1657}, that ``it is the same thing'' to receive 5
shillings as it is to have an equal chance of receiving either 3 or
7 shillings. \cite{Lakatos1976} points out that it can be hard to
imagine in retrospect that an eminent mathematician made a hidden
assumption, which is often perceived as an error. He writes on p.~46,
``while they [the hidden assumptions] were in your {\it subconscious},
they were listed as {\it trivially true} -- the ...[paradox] however
made them summersault into your conscious list as {\it trivially
  false}.''  Similarly, it seems trivially true at first that the
expected gain from playing the lottery should be the criterion for
participating -- taking time into account makes this assumption
summersault into being trivially false.

Thus the St. Petersburg paradox relies for its existence on the
assumption that the expected gain (or growth factor or exponential
growth rate) is the relevant quantity for an individual deciding
whether to take part in the lottery.  This assumption can be shown to
be implausible by carefully analyzing the physical meaning of the
ensemble average. A quantity that is more directly relevant to the
financial well-being of an individual is the growth of an investment
over time. Utility, which can obscure risks, is not necessary to
evaluate the situation and resolve the paradox. It is the actual
wealth, in \$, of a player, not the utility, that grows with
$\bar{g}$, \eref{time_growth}. It is manifestly not true that the
commonly used ensemble-average performance of the lottery equals the
time-average performance. In this sense the system is not ergodic, and
statements based on anything other than measures of the actual
time-average performance must be interpreted carefully. 

\section*{Acknowledgments}
For discussions and thoughtful comments and suggestions I would like
to thank A. Adamou, M. Gell-Mann, R. Hersh, D. Holm, O. Jenkinson,
W. Klein, J. Lamb, J. Lawrence, C. McCarthy and J.-P. Onstwedder. This
work was supported by ZONlab Ltd.

\end{document}